\newtheorem{theorem}{Theorem}
\newtheorem{proposition}{Proposition}
\newcommand{\num}{\text{Num}}
\newtheorem{lemma}{Lemma}
\date{July 1, 2018}
\theoremstyle{definition}
\newtheorem{definition}{Definition}
\title{Limiting curves for the dyadic odometer and the generalized Trollope-Delange formula.\footnote{The study of the limiting curves for the  dyadic odometer (Sec. 3.1) is supported by RFBR (grant 17-01-00433).  The study of the q-generalized Trollope-Delange formula (Sec.~3.2) is supported by the  Russian Science Foundation (grant 17-71-20153).}
}
\author{Aleksey~Minabutdinov\thanks{National Research University Higher School of Economics, Department of Applied Mathematics and Business Informatics, St.Petersburg, Russia, e-mail: \texttt{ aminabutdinov@hse.ru.}}}
\begin{document}
\maketitle

\begin{abstract}
We study limiting curves resulting from deviations in partial sums in the ergodic theorem for the dyadic odometer and non-cylindric functions.  In particular, we generalize the Trollope-Delange formula for the case of the weighted sum-of-binary-digits function and show that the Takagi-Landsberg curve arises.
\end{abstract}

{\bf Key words:}  limiting curves, weighted sum-of-binary-digits function, Takagi-Landsberg curve, $q$-analogue of the Trollope-Delange formula

\emph{MSC:} 11A63, 39B22, 	37A30
\section{Introduction}

Let  $T$ be a measure preserving transformation defined on a Lebesgue probability space $(X, \mathcal{B},\mu)$ with an invariant ergodic probability measure $\mu$. Let    $g$ denote a function in $L^1(X,\mu)$. In \cite{DeLaRue} \'{E}. Janvresse,  {T.} de~la Rue, and {Y.}~Velenik in the process of studying the Pascal adic transformation\footnote{The Pascal adic transformation was invented by A.~Vershik (and independently by S.~Kakutani), see \cite{ver82}, \cite{Kakutani1976}, and intensively studied since then, see e.g. \cite{PascalLooselyBernoulli}, \cite{MelaPetersen}, \cite{ver11}, \cite{ver15}. According to the authors of \cite{DeLaRue} their research was motivated by the observation by X.~Mela.}  introduced a new notion of \emph{a limiting curve}. Following \cite{DeLaRue} for a point $x\in X$ and a positive integer $j$ we denote the partial sum $\sum\limits_{k=0}^{j-1}g\big(T^kx\big)$ by $\mathcal{S}_{x}^g(j)$. We extend the function $\mathcal{S}_{x}^g(j)$ to a real valued argument by a linear interpolation and denote extended function again by  $\mathcal{S}_{x}^g(j)$ or simply $\mathcal{S}(j),j\geq 0$.

Let  $(l_n)_{n=1}^{\infty}$ be a sequence of positive integers. We consider continuous on $[0,1]$ functions $\varphi_n(t) = \frac{\mathcal{S}(t\cdot l_n(x)) - t \cdot \mathcal{S}(l_n)}{R_{n}} \big( \equiv\varphi_{x,l_n}^g(t)  \big),$ where the normalizing coefficient $R_{n}$ is canonically defined to be equal to the maximum in $t\in[0,1]$ of $|\mathcal{S}(t\cdot l_n(x)) - t \cdot \mathcal{S}(l_n)|$.

\begin{definition}[\cite{DeLaRue}]\normalfont 
If there is a sequence $l^g_n(x)\in\mathbb{N}$  such that functions $\varphi_{x,l^g_n(x)}^g$ converge to a (continuous) function $\varphi_x^g$ in sup-metric on $[0,1],$ then
the graph of the limiting function $\varphi=\varphi^g_x$ is called a \emph{limiting curve}, sequence $l_n=l_n^g(x)$ is called a \emph{stabilizing sequence} and the sequence $R_{n} = R_{x,l^g_n(x)}^g$ is called a \emph{normalizing sequence}.  The quadruple $\Big(x, \big(l_n\big)_{n=1}^\infty, \big(R_{n}\big)_{n=1}^\infty, \varphi\Big)$ is called a \emph{limiting bridge}.
\label{def:LimitShape}
\end{definition}

Heuristically, the limiting curve describes   small fluctuations (of certainly renormalized) ergodic sums $\frac{1}{l}\mathcal{S}(l), l\in(l_n),$ along the forward trajectory $x, T(x), T^2(x)\dots$. More specifically, for $l\in(l_n)$ it holds $\mathcal{S}(t\cdot l) = t \mathcal{S}(l)+R_l \varphi(t)+o(R_l)$, where $t\in[0,1].$

For the Pascal adic transformation in \cite{DeLaRue} and \cite{LodMin16} it was shown  that for $\mu$-a.e. $x$ a limiting bridge exists and stabilizing sequence $l_n(x)$ can be chosen in a such way that the well-known Takagi curve (and its generalizations) arises in the limit. In \cite{Min17}  results were generalized for a wider class of polynomial adic dynamical systems. In general limiting curves are not well-studied yet for a lot of interesting transformations. In particular, authors of \cite{DeLaRue}  asked (see Sec. 4.3.2.) whether limiting curves can exist for transformations in the rank-one category.

Findings on limiting curves motivated several researches (see e.g. \cite{Nogueira})  to attack the problem
of the spectrum of the Pascal adic transformation. Despite interesting results were obtained, the problem is still
unsolved\footnote{However, there is a general consensus that spectrum should at least comprise a
 continuous component, see \cite{ver15}.}. A natural  starting  hypothesis is that for a system with
 discrete spectrum a (Besicovitch) almost periodic property of trajectories should imply that no limiting
  curve exists. However, this is not the case and the goal of this paper is to construct a concrete
   counterexample. 
We consider the  dyadic odometer (as a simplest  rank-one system with discrete dyadic spectra) and show that for certain functions  limiting curves arise. Surprisingly,  limiting curves that we find belong to the so called Takagi class, see below. Technically, we generalize the famous Trollope-Delange formula for the case of weighted-sum-of-binary digits function, that does not seem to have been published before.

\section{Designations}

\subsection{Takagi-Landsberg functions}
Let $|a|<1$. The Takagi-Landsberg function with parameter $a$ is defined by the identity
\begin{equation}\mathcal{T}_a(x) = \sum\limits_{n=0}^\infty a^{n}\tau(2^n x),\label{eq:TakagiLandsberg}\end{equation}
 where
$\tau(x) = \text{dist}(x, \mathbb{Z}) $, the distance from $x$ to the nearest integer.  It is immediately clear that the series converges uniformly\footnote{More general $\sum\limits_{n=0}^\infty c_n\tau(2^n x)$ with $\sum\limits_{n=0}^\infty |c_n|<\infty$ can be considered; this family of functions also known as the Takagi class.}, and hence defines a continuous function $\mathcal{T}_a$ for $|a|<1$. Functions\footnote{For the Pascal adic another class of  generalized Takagi functions appeared in \cite{DeLaRue, LodMin16}, the only intersection is the Takagi function $\mathcal{T}$ itself.} $\{\mathcal{T}_a\}_a$ can be considered as  direct generalization of the famous Takagi-Blancmange function $\mathcal{T}$ which is obtained when $a$ equals to $1/2$, see  \cite{Takagi1903}.  Functions $\mathcal{T}_a$ are $1$-periodic and  nowhere differentiable for $|a|\geq\frac12$ but differentiable almost
everywhere\footnote{In particular, $\mathcal{T}_{1/4}(x) =x(1-x)$. } for $|a|<\frac12$, see \cite{Landsberg} and more results in \cite{Lagarias} and \cite{AllaartKawamura2012}. In \cite{Allaart11, Boros, TaborTabor2009} the so called approximate midconvexity property of the functions $\mathcal{T}_a, a\in[1/4,1/2]$, was studied.

\begin{figure}[t]
                \centering
                \includegraphics[scale=0.17]{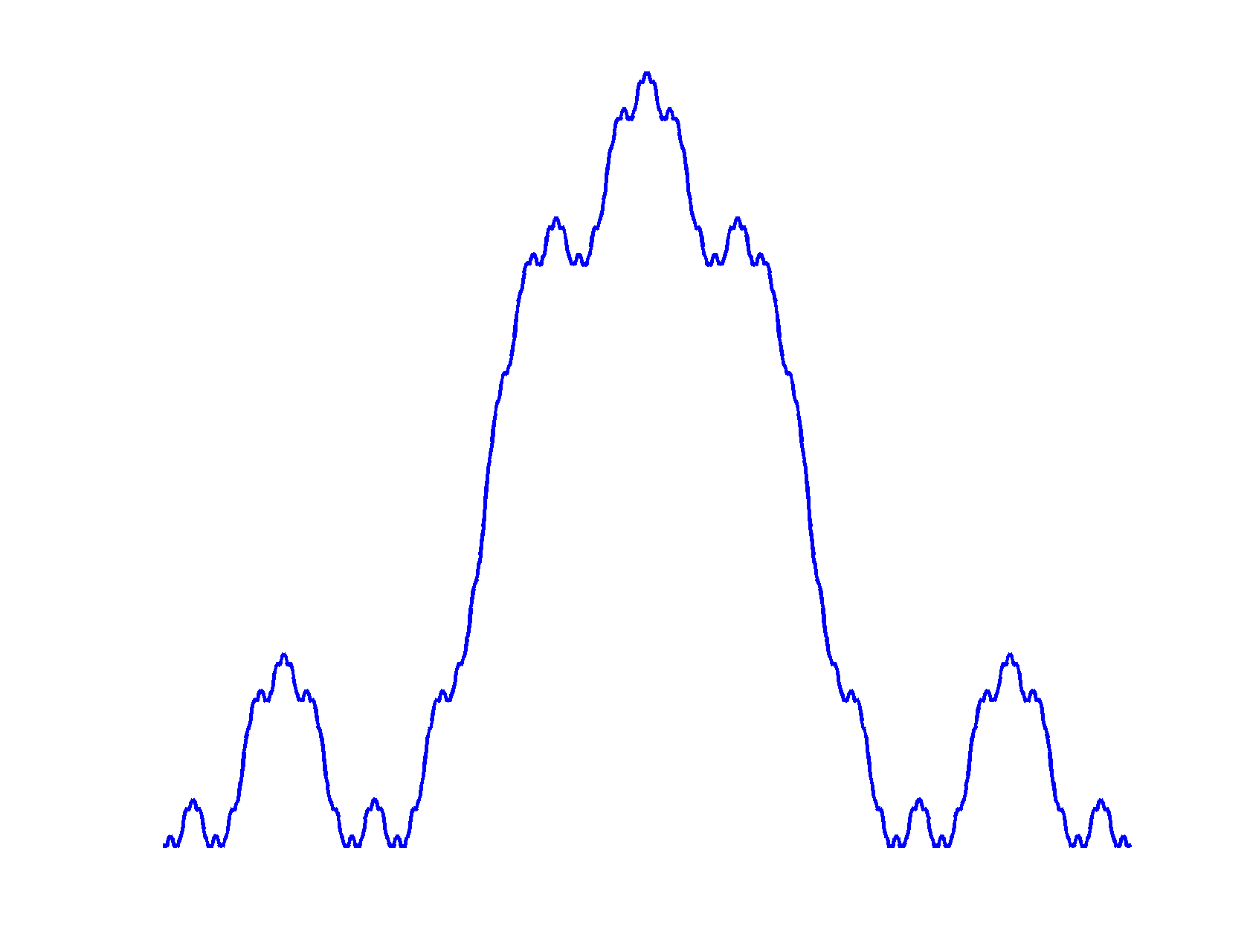}
                \includegraphics[scale=0.17]{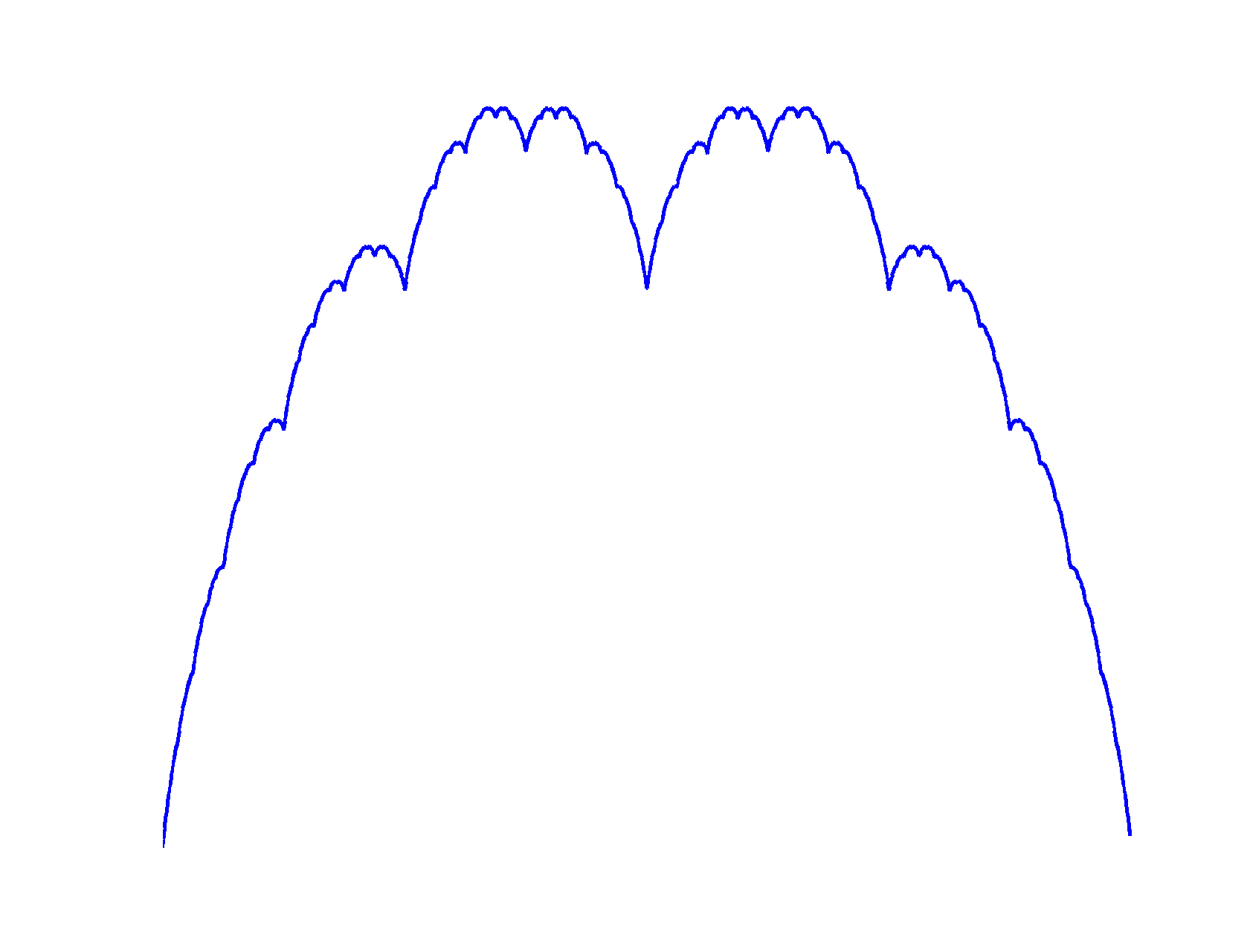}
                \includegraphics[scale=0.17]{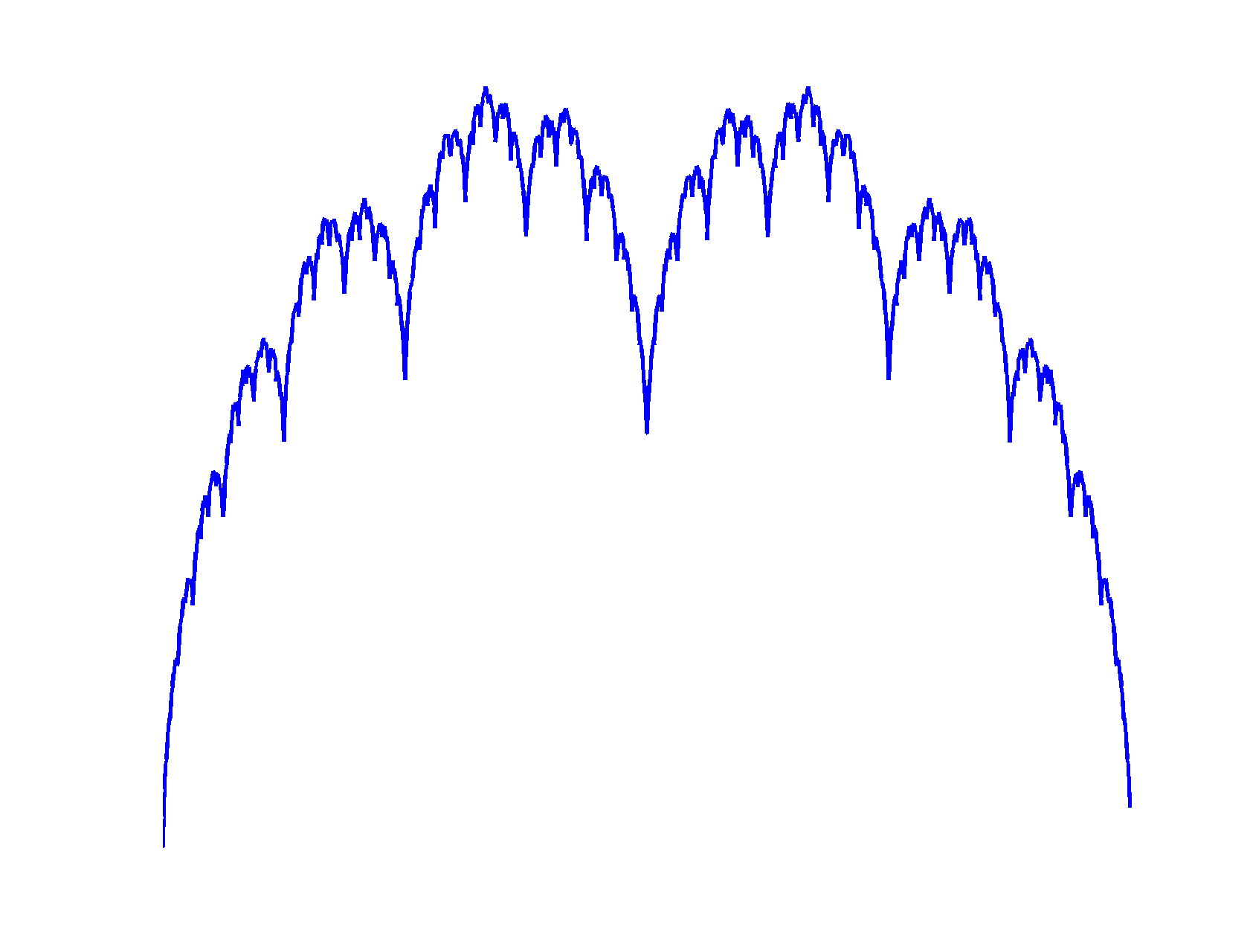}
                \includegraphics[scale=0.17]{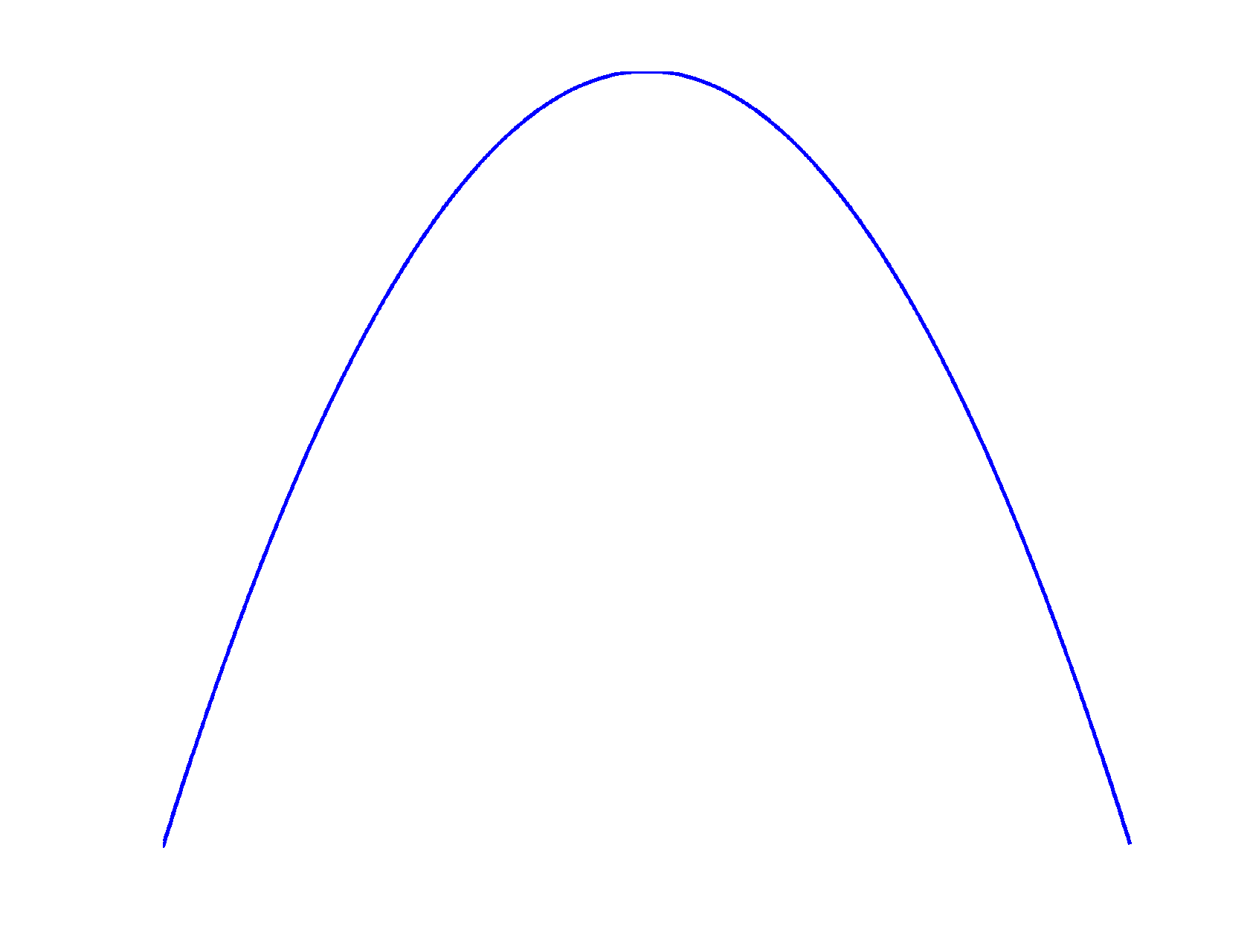}
                \caption{Takagi-Landsberg curves with different parameters: $a=-1/2$ (the alternating sign Takagi curve), $a=1/2$ (the Takagi-Blancmange curve), $a=2/3$, $a=1/4$ (parabola).  }
                \label{fig:Odometer}
\end{figure}

It  immediately follows from \eqref{eq:TakagiLandsberg} that  $\mathcal{T}_a$ satisfies the following de Rham type functional equations for $x\in[0,1]$:

\begin{equation}\begin{cases}\mathcal{T}_a(x/2) =  a \mathcal{T}_a(x)+x/2,\\
\mathcal{T}_a(\frac{x+1}{2}) =  a \mathcal{T}_a(x)+\frac{1-x}{2}.\end{cases}\label{eq:deRhamForTa}\end{equation}

Conversely,  as shown in \cite{Barnsley} using the Banach's fixed point theorem (see also \cite{Girgensohn1994} and \cite{Girgensohn2012}) any system of functional equations
\begin{equation}\begin{cases}f(x/2) =  a_0 f(x)+g_0(x),\\
f(\frac{x+1}{2}) =  a_1 f(x)+g_1(x),\end{cases}\label{eq:deRhamForTa}\end{equation}
with $\max\{|a_0|, |a_1|\}<1$ and such that consistency condition\footnote{i.e. both equations gives the same value at $x=1/2.$} holds
\begin{equation}
a_0\frac{g_1(1)}{1-a_1}+g_0(1)=a_1\frac{g_0(0)}{1-a_0}+g_1(0),
\end{equation}
defines a unique continuous function on $[0,1]$.

\subsection{Dyadic odometer}

Consider $X = \mathbf{Z}_2 =\prod\limits_{0}^\infty\{0,1\}$, the compact additive dyadic group of dyadic integers with Haar measure $\mu$, and let $T : Tx = x + 1$ be the addition of unity. The dynamical system  $(X,T,\mu)$ is called the dyadic odometer. It is one of the simplest transformations in the rank one category. Space $X$ can be identified with a paths space of a simple Bratteli-Vershik diagram with only one vertex at each level $n, n=0,1,2\dots$, see Fig. \ref{fig:Odometer}.  A cylinder set $C = [c_1c_2\dots c_n] = \{x\in X|x_1=c_1, \dots, x_n=c_n\}$ of a rank $n$ is totally defined by a finite path from the origin to the  vertex $n$. Sets $\pi_n$ of linearly ordered  (this order is  called adic or colexicographical, see \cite{Ver81} and \cite{ver82} for the original definition) finite paths are in one to one correspondence with towers
$\tau_n$ made up of corresponding cylinder sets.  Towers define  approximation of transformation $T$, see \cite{Ver81}. Cylinder sets $[x_1,x_2\dots, x_n],x_i\in\{0,1\},$ constituting towers are called rungs; the bottom rung corresponds to the cylinder  $[0,0,\dots,0].$   There are a total of $2^n$ paths in each  $\pi_n$ (or rungs in $\tau_n$).


Using canonical mapping $\num: N\rightarrow \mathbb{N}_0$ defined by $\num(x) = \sum\limits_{i=1}^\infty x_i 2^i$, the set $N=\{x\in  \mathbf{Z}_2, \text{s.t.} \sum\limits_{i=1}^\infty x_i<\infty\}$ can be naturally identified with  nonnegative integers $\mathbb{N}_0$.  We follow the agreement that finite paths can be continued with zeroes, then the image set  $\num(\pi_n)$   is the discrete interval $[0,1,2,\dots, 2^n-1]$. Of course, for $x\in N$ we have $\num(T x) = \num(x)+1$.

\begin{figure}[t]
                \centering
                \includegraphics[trim=1cm 0cm 5cm 0cm,scale=0.75]{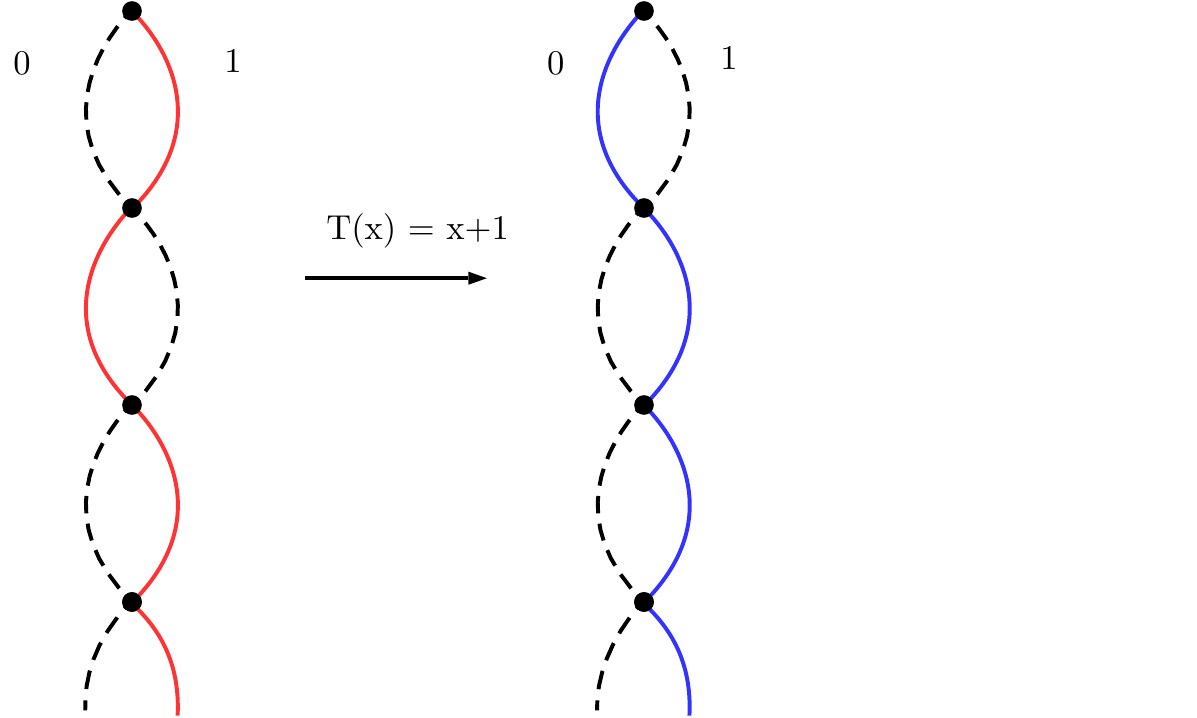}
                \caption{The dyadic odometer: the red (solid) path is mapped to the blue (solid) path.  }
                \label{fig:Odometer}
\end{figure}

The following lemma reads that for a.e. $x\in X$ we can choose a sequence of levels in the Bratteli diagram such that $x$ lies $\epsilon$-close to the bottom rung of $\tau_{n_j}, j=1,2\dots$. Its proof will follow the ideas of Janvresse and de la Rue from \cite{PascalLooselyBernoulli}.

\begin{lemma} For $\mu$-a.e. $x$ for any $\varepsilon >0$ there is a sequence $(n_j)_j$ such that index $\num(\omega^j)$ of a finite path $\omega^j=(x_1, x_2\dots, x_{n_j})$ satisfies the following inequality $\num(\omega^j)/|\pi_{n_j}|<\varepsilon$.
\label{lemma:bottomRung}
 \end{lemma}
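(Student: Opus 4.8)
The plan is to translate the geometric statement ``$x$ lies $\varepsilon$-close to the bottom rung of $\tau_{n_j}$'' into a purely combinatorial condition on the binary digits of $x$, and then to produce the required levels by a Borel--Cantelli argument. First I would compute the ratio explicitly. Writing $\omega^j=(x_1,\dots,x_{n_j})$ and $|\pi_{n_j}|=2^{n_j}$, the quantity $\num(\omega^j)/2^{n_j}$ is governed by the most significant digits of $\omega^j$, namely those with index close to $n_j$ (recall $\num(\pi_n)=[0,2^n-1]$, so $x_{n_j}$ carries the top weight). Concretely, if the top $m$ digits vanish, $x_{n_j}=x_{n_j-1}=\dots=x_{n_j-m+1}=0$, then $\num(\omega^j)/2^{n_j}<2^{-m}$, since the surviving digits contribute at most $\sum_{i\le n_j-m}2^{i-1-n_j}<2^{-m}$. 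Hence, for a given $\varepsilon$, it suffices to choose $m$ with $2^{-m}<\varepsilon$ and to exhibit infinitely many levels $n$ at which a block of $m$ consecutive zeros ends.

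For the existence of such levels I would use independence of the coordinate functions under the Haar measure $\mu$, with respect to which $(x_i)_{i\ge1}$ are i.i.d. Bernoulli$(1/2)$. Fix $m$ and consider the disjoint blocks $[rm+1,(r+1)m]$, $r=0,1,2,\dots$, together with the events $B_r=\{x_{rm+1}=\dots=x_{(r+1)m}=0\}$. These events are independent, since they involve disjoint sets of coordinates, and each has probability $\mu(B_r)=2^{-m}>0$, so $\sum_r\mu(B_r)=\infty$. By the second Borel--Cantelli lemma, $\mu$-a.e. $x$ lies in infinitely many $B_r$. For each such $r$ the level $n_j:=(r+1)m$ has its top $m$ digits equal to zero, whence $\num(\omega^j)/|\pi_{n_j}|<2^{-m}<\varepsilon$, as required; the levels $(r+1)m$ tend to infinity and give the sought sequence $(n_j)_j$.

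Finally I would arrange the quantifier order so that a single full-measure set serves every $\varepsilon$. For each $k\in\mathbb{N}$ pick $m_k$ with $2^{-m_k}<1/k$, let $A_k$ be the full-measure set of points lying in infinitely many of the associated blocks, and put $A=\bigcap_k A_k$. Then $\mu(A)=1$, and for $x\in A$ and arbitrary $\varepsilon>0$ one chooses $k$ with $1/k<\varepsilon$ and uses the levels furnished by $A_k$. The argument is essentially a recurrence statement for the odometer and is elementary; the only points demanding care are the exact translation between the digit condition and the ratio bound, i.e. ensuring that the vanishing block is flush against the endpoint $n_j$ (which is precisely why the blocks are taken to terminate at the levels $n_j$), and the routine bookkeeping needed to pass from ``for each $\varepsilon$'' to ``for all $\varepsilon$ simultaneously.'' I do not expect a genuine obstacle. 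Should one prefer to avoid Borel--Cantelli, the same conclusion follows from the ergodicity of $T$, since for a.e. $x$ the frequency of the all-zero block of length $m$ equals its measure $2^{-m}>0$, forcing infinitely many occurrences.
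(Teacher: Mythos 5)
Your proof is correct, and its combinatorial core is the same as the paper's: both arguments reduce the lemma to finding infinitely many levels $n_j$ at which the top block of digits $x_{n_j-m+1},\dots,x_{n_j}$ vanishes, which forces $\num(\omega^j)/2^{n_j}<2^{-m}$. Where you differ is in how those zero-runs are produced. The paper follows Janvresse--de la Rue and introduces the symmetric random walk $Z_m=2\sum_{i=1}^m(x_i-\tfrac12)$, invoking its recurrence to extract, for each $r$, infinitely many moments followed by $r$ consecutive down-steps (the paper's displayed condition $Z_{m_j+1}=\dots=Z_{m_j+r}=-1$ is evidently a typo, since a $\pm1$ walk cannot hold a value for consecutive steps; the intended meaning is $Z_{m_j+i}-Z_{m_j}=-i$ for $i=1,\dots,r$, and strictly speaking one still needs the strong Markov property or a Borel--Cantelli step to pass from recurrence to infinitely many occurrences of that pattern). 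You instead exploit the i.i.d.\ Bernoulli$(1/2)$ structure of the coordinates directly: the events $B_r$ on disjoint blocks $[rm+1,(r+1)m]$ are independent with probability $2^{-m}$, and the second Borel--Cantelli lemma gives infinitely many successes a.s. This is more elementary and airtight than the paper's random-walk detour, and it automatically places the zero-run flush against the level $n_j=(r+1)m$, which is exactly the point requiring care. Your explicit countable intersection over $k$ to obtain a single full-measure set valid for all $\varepsilon$ simultaneously is also handled more carefully than in the paper, where this step is implicit. Your closing remark that ergodicity (frequencies of the all-zero block of length $m$) would serve equally well is likewise sound.
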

\begin{proof} Let  $Z_m=2\sum_{i=1}^m(x_i-\frac12)$.
 Then  $Z_m$ is a symmetric random walk and thus recurrent. Therefore for $\mu$-a.e. $x$ and any $r\in\mathbb{N}$  we can choose a sequence of moments  $m_j$ such that $Z_{m_j+1}=Z_{m_j+2}=\dots=Z_{m_j+r}=-1.$ That means that $x$ belongs to the one of $2^{m_j}$  first rungs of the tower $\tau_{m_j+r}.$ Other way to say this is that $\num(x_1,x_2\dots,x_{m_j+r})<2^{m^j}$. Having in mind $|\tau_{n}|=2^n$ the former inequality can be rewritten as \[\frac{\num(x_1,x_2\dots,x_{m_j+r})}{|\tau_{m_j+r}|}<2^{-r}.\]
Choosing $r$ such that $2^{-r}<\varepsilon$ and setting $n_j=m_j+r$ we obtain the required statement.
\end{proof}

In  \cite{DeLaRue} and \cite{Min17} it was shown that the necessary condition for a limiting curve to exist is an unbounded growth of the normalizing coefficient $R_n$.  For the stationary odometer (unlike for the Pascal adic) this implies that there are no limiting curves for a \emph{cylindric} (i.e.  depending on a finite number of coordinates) function $g$ since partial sums $\mathcal{S}_{x}^g$ are a.s. bounded for such $g$, see \cite{Min17} Theorems 1-3. Therefore we need to consider noncylindric functions. Let $0<|q|<1$ we denote by $s_q:\mathbf{Z}_2\rightarrow \mathbb{R}$ a  weighted-sum-of-digits function defined by the identity
$s_q(x) =\sum\limits_{i=1}^\infty x_nq^n $ for $x=(x_1, x_2, \dots, x_n,\dots)\in \mathbf{Z}_2 .$ We denote by $\mathbf{0}$ the zero path $(0,0,\dots)$ and by $\mathcal{S}_{x}^q(n)$ the partial sums defined by $\mathcal{S}_{x}^{s_q}=\sum\limits_{j=0}^{n-1} s_q(T^jx)$.

\subsection{The Trollope-Delange formula}

For $q=1$ function $s_1:N\rightarrow\mathbb{R}$ is the well-known sum-of-binary-digits function. Commonly sum-of-binary-digits function  is denoted simply by $\textbf{s}$ instead of $s_1$  and  defined on $\mathbb{N}_0$ instead of $N$, but, as mentioned above, we identify sets $N$ and $\mathbb{N}_0$ and we have the identity $\textbf{s}\circ \num = s_1$. For $S=\mathcal{S}^1_{\mathbf{0}}$ the following  Trollope-Delange formula holds  (see \cite{Trollope} and \cite{Delange}):

\begin{equation}\label{eq:TrollopeDelange}
\frac{1}{n}S(n) = \frac12\log_2(n)+\frac12\tilde{F}(\log_2(n)),\end{equation}
where the $1$-periodic function $\tilde{F}$ is given by
\begin{equation}\tilde{F}(t) = 1-t - 2^{1-t}\mathcal{T}(2^{-(1-t)}), \text { for } 0\leq t\leq 1.\end{equation}

Several extensions of the Trollope-Delange formula are known. Besides the classical case $S(n)=\sum\limits_{j=0}^{n-1}\textbf{s}(j)$, its analogues for exponential $\sum\limits_{j=0}^{n-1}\exp(t\, \textbf{s}(j))$, power  $\sum\limits_{j=0}^{n-1} \textbf{s}^{k-1}(j)$ and binomial  $\sum\limits_{j=0}^{n-1} \binom{\textbf{s}(j)}{m}$  sums were studied, see \cite{Kruppel}, \cite{Girgensohn2012} or \cite{AllaartKawamura2012} for the history and starting links.

The Trollope-Delange formula also appears to be useful for describing functions like  $\varphi_n(t) = \frac{\mathcal{S}(t\cdot l_n(x)) - t \cdot \mathcal{S}(l_n)}{R_{n}}$. We  show below, see Proposition \ref{Prop:MainProp}, that  $\varphi_n(t_j)=\mathcal{T}(t_j)$ for $t_j=\frac{j}{l_n}, j=0,1,2\dots l_n,$ and $l_n=2^n$. Unfortunately, function $s_1$ is not well-defined on $X=\mathbf{Z}_2 $. So we find generalization of the Trollope-Delange formula for the weighted-sum- of-binary-digits function  $s_q$.



\section{Main results}

The main results of the paper are the following two statements:

\begin{theorem}
\label{Th:mainTheorem} Let $(X,T,\mu)$ be the dyadic odometer and $\frac12<|q|<1$.
Then for $\mu$-a.e. $x$ there exists a stabilizing sequence $l_n=l_n(x)$ such that $\varphi_{x,l_n}^{s_q}$ converges in sup-metric on $[0,1]$ to the  function $-\mathcal{T}_{a}$, where $a={1}/(2q)$.
\end{theorem}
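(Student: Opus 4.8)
The plan is to reduce the statement to an explicit computation at the bottom of a tall tower, to read off from the resulting self-similar recursion the de Rham system that defines $-\mathcal{T}_a$, and finally to transfer the conclusion from the orbit of $\mathbf{0}$ to the orbit of a $\mu$-generic $x$ by means of Lemma \ref{lemma:bottomRung}.

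\textbf{Step 1 (bottom of the tower).} Starting from the zero path $\mathbf{0}$, the path $T^k\mathbf{0}$ is the path of $k$, so $s_q(T^k\mathbf{0})=\sum_{i\ge1}(\text{$i$-th binary digit of }k)\,q^i$. Put $A_n:=\mathcal{S}_{\mathbf{0}}^q(2^n)$ and, for $t\in[0,1]$, let $\Phi_n(t):=\mathcal{S}_{\mathbf{0}}^q(t\,2^n)-t\,A_n$ (linearly interpolated). Splitting the block $\{0,\dots,2^n-1\}$ into its lower and upper halves, the top digit being $1$ on the upper half and contributing a constant $q^n$ to every term, yields $A_n=2A_{n-1}+2^{n-1}q^n$, and after cancelling the linear terms the two-branch recursion
\[\Phi_n(x/2)=\Phi_{n-1}(x)-\tfrac{x}{2}2^{n-1}q^n,\qquad \Phi_n(\tfrac{x+1}{2})=\Phi_{n-1}(x)-\tfrac{1-x}{2}2^{n-1}q^n,\quad x\in[0,1].\]
This is precisely the weighted Trollope--Delange identity, which I would state and prove as the second main result and invoke here.

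\textbf{Step 2 (passage to $-\mathcal{T}_a$).} Normalising by $\beta_n:=2^{n-1}q^n$ and setting $\phi_n:=\Phi_n/\beta_n$, and using $\beta_{n-1}/\beta_n=1/(2q)=a$, the recursion becomes
\[\phi_n(x/2)=a\,\phi_{n-1}(x)-\tfrac{x}{2},\qquad \phi_n(\tfrac{x+1}{2})=a\,\phi_{n-1}(x)-\tfrac{1-x}{2},\qquad x\in[0,1].\]
Since $\tfrac12<|q|<1$ we have $|a|<1$, so the operator sending $f$ to the function defined by these two branches is a contraction of factor $|a|$ in the sup-norm on $\{f\in C[0,1]:f(0)=f(1)\}$; starting from $\phi_0\equiv0$ (indeed $\Phi_0\equiv0$, as $A_0=0$) the iterates converge uniformly, at geometric rate $|a|^n$, to the unique fixed point. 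Comparing the two branches with the de Rham system satisfied by $\mathcal{T}_a$ identifies that fixed point as $-\mathcal{T}_a$. Hence $\phi_n\to-\mathcal{T}_a$ uniformly.

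\textbf{Step 3 (from $\mathbf{0}$ to $\mu$-a.e. $x$).} Fix $x$ and, by Lemma \ref{lemma:bottomRung}, choose levels $n_j$ with $p_j:=\num(x_1\dots x_{n_j})<\varepsilon_j2^{n_j}$, $\varepsilon_j\downarrow0$; take $l_{n_j}=2^{n_j}$. As long as no carry leaves the first $n_j$ coordinates (the first $2^{n_j}-p_j$ steps) the first $n_j$ digits of $T^kx$ are those of $p_j+k$, while the tail contributes a constant $\delta_j:=\sum_{i>n_j}x_iq^i=O(|q|^{n_j})$; after the single wrap the tail changes to some $\delta_j'$ of the same order. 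Thus the low-digit part of $\mathcal{S}_x^q(t\,l_{n_j})$ equals $\Phi_{n_j}(u_0+t)-\Phi_{n_j}(u_0)$ plus a term linear in $t$, where $u_0=p_j/2^{n_j}<\varepsilon_j$. The linear term and the constants are annihilated by the bridge operation $f\mapsto f(t\,l)-t\,f(l)$, and by uniform continuity of $\mathcal{T}_a$ together with $\mathcal{T}_a(0)=0$ one gets $\phi_{n_j}(u_0+\cdot)-\phi_{n_j}(u_0)\to-\mathcal{T}_a$ uniformly. The tail part is piecewise linear with a single kink at $t=1-u_0$, so its deviation from its own chord is $O(\varepsilon_j)\,\beta_{n_j}$ and is negligible after normalisation. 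Collecting the three error sources and letting $j\to\infty$ gives $\mathcal{S}_x^q(t\,l_{n_j})-t\,\mathcal{S}_x^q(l_{n_j})=\beta_{n_j}\bigl(-\mathcal{T}_a(t)+o(1)\bigr)$ uniformly in $t$, i.e. $\varphi_{x,l_{n_j}}^{s_q}\to-\mathcal{T}_a$ with normalising sequence $R_{n_j}\asymp(2q)^{n_j}$. With the canonical choice $R_n=\max_t|\cdot|$ one obtains the same curve up to the scalar factor $\|\mathcal{T}_a\|_\infty$.

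\textbf{Main obstacle.} Steps 1--2 are an essentially algebraic contraction argument; the genuine difficulty is Step 3, namely controlling \emph{uniformly in $t$}, and after the large normalisation $\beta_{n_j}\asymp(2q)^{n_j}$, the three discrepancies between the true orbit of $x$ and the idealised bottom-of-tower orbit: the phase shift $u_0<\varepsilon_j$, the wrap interval of relative length $<\varepsilon_j$, and the varying tail. The decisive point is that Lemma \ref{lemma:bottomRung} allows $\varepsilon_j\to0$, so that the per-term tail $O(|q|^{n_j})$ summed over the $O(\varepsilon_j2^{n_j})$ steps of the wrap interval stays below $R_{n_j}$, and the phase shift washes out by uniform continuity of the nowhere-differentiable limit.
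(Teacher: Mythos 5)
Your proof is correct, but it takes a genuinely different route from the paper's in both of its halves. For the orbit of $\mathbf{0}$, the paper does not run a fixed-point iteration: it quotes part 2 of Proposition~\ref{Prop:MainProp}, which is proved by Girgensohn's method (functional equations for $S_q(n)$, the auxiliary function $G_q$, identification of $F_q(x)=qx-\tfrac12\mathcal{T}_a(x)$, and an exact interval-by-interval computation giving $\varphi_l^q(t_j)=-q\,\mathcal{T}_a(t_j)$ at every dyadic $t_j=j/l$); your Steps 1--2 instead derive the two-branch recursion for the bridges $\phi_n$ directly and identify the limit as the unique fixed point of an $|a|$-contraction on continuous functions with $f(0)=f(1)$. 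Your version is shorter, self-contained, and yields uniform convergence at geometric rate $|a|^n$, but it does not produce the closed-form generalized Trollope--Delange formula \eqref{eq:generalizedTrollopeDelange}, which for the paper is a second result of independent interest. The more significant difference is Step 3. The paper replaces $x$ by the finite path $y$ and compares the orbit of $y$ with that of $\mathbf{0}$ by a term-counting estimate: a numerator of order $2^m$ against a denominator written as $c\,2^{m+r}$. But by the paper's own Proposition~\ref{Prop:MainProp} the canonical normalization is $R_n\asymp(2|q|)^{m+r}$, not $2^{m+r}$; with the correct denominator the term-count bound is of order $|q|^{-m}(2|q|)^{-r}$, which diverges, since Lemma~\ref{lemma:bottomRung} produces $m=m(r)$ far larger than $r$ (a run of $r$ zeros a.s. first occurs at a position exponentially large in $r$). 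Your phase-shift decomposition --- writing the low-digit part of $\mathcal{S}_x^q$ exactly as $\Phi_{n_j}(u_0+\cdot)-\Phi_{n_j}(u_0)$ (the periodic extension being legitimate since $\Phi_n(0)=\Phi_n(1)=0$) and then absorbing the shift $u_0<\varepsilon_j$ via uniform continuity of $\mathcal{T}_a$ together with the uniform convergence $\phi_n\to-\mathcal{T}_a$ --- controls the discrepancy by the modulus of continuity of the limit rather than by the number of differing terms, which is exactly what this step requires; so your transfer argument not only differs from the paper's but in effect repairs its estimate. Two small caveats, shared with the paper: with the canonical choice $R_n=\max_t|\cdot|$ the limit is $-\mathcal{T}_a/\|\mathcal{T}_a\|_\infty$ rather than $-\mathcal{T}_a$ itself, and for $q<0$ the sign of $\beta_n=2^{n-1}q^n$ alternates, so the stabilizing sequence should be taken along even (or odd) $n_j$; both are harmless because the theorem leaves $l_n(x)$, and the paper leaves the proportionality constant in $R_n$, at our disposal.
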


For a nonnegative integer $j$ with binary expansion  $j = \sum\limits_{i\geq 0}\omega_i2^i$ we denote by $\textbf{s}_q(j)$ the weighted-sum-of-binary-digits function $\sum\limits_{i\geq 0}\omega_iq^{i+1}$ and set $S_q(n) =\sum\limits_{j=0}^{n-1}\textbf{s}_q(j)$.
The next proposition does not use the notion of the odometer and its first part  generalizes Theorem $5.1$ by Kr\"uppel in \cite{Kruppel2008}, where author considers the case of "alternating sums"\,, that are obtained here  if $q=-1$.

\begin{proposition}
\label{Prop:MainProp}
Let $|q|>1/2$ and $a=1/(2q)$.

$1.$  The following generalized Trollope-Delange formula holds

\begin{equation}
\label{eq:generalizedTrollopeDelange}
\frac1nS_q(n) = \frac{q}{2}\bigg(\frac{1-q^{\log_2(n)}}{1-q} + q^{\log_2(n)}\hat{F}_q(\log_2(n))\bigg), \end{equation}
where the $1$-periodic function $\hat{F}_q$ is given by $\hat{F}_q(u) = \frac{1-q^{1-u}}{1-q}-q^{-u}2^{1-u}\mathcal{T}_a(2^{-(1-u)}), $ $u\in[0,1].$

$2.$ Let $l=2^k$ for any fixed $k\in \mathbb{N}$, then the following identity holds:

\begin{equation}{\varphi}_l^q(t_j) = -q \mathcal{T}_a(t_j),\label{eq:varphi_l}\end{equation} 
where $\varphi_l^q(t)= \frac{S_q(t\cdot l) - t \cdot S_q(l)}{R_{l}}$ $t_j=\frac{j}{l}, j=0,1,2,\dots,l.$ Moreover, the renormalization coefficient $R_{l}$ is proportional to $(2|q|)^{\log_2(l)}$.
\end{proposition}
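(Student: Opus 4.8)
The plan is to reduce both statements to one exact identity for the centered partial sums at dyadic points, from which Part~2 is immediate and Part~1 follows by elementary algebra. Throughout write $M_k=S_q(2^k)$ and, for $0\le j\le 2^k$, set the centered quantity $D_k(j)=S_q(j)-\frac{j}{2^k}M_k$, so that when $l=2^k$ and $t_j=j/2^k$ one has $\varphi_l^q(t_j)=D_k(j)/R_l$. The central claim I would establish is the closed form
\[
D_k(j) = -2^{k-1}q^{k}\,\mathcal{T}_a\!\left(\frac{j}{2^k}\right),\qquad 0\le j\le 2^k,\quad a=\tfrac1{2q}.
\]
Granting this, Part~2 is immediate: dividing by $R_l:=2^{k-1}q^{k-1}$ (whose modulus equals $(2|q|)^{k-1}$, hence is proportional to $(2|q|)^{\log_2 l}$) yields $\varphi_l^q(t_j)=-q\,\mathcal{T}_a(t_j)$. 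As a first step I would compute $M_k$ by counting: each bit position $i\in\{0,\dots,k-1\}$ carries a $1$ in exactly $2^{k-1}$ of the integers in $[0,2^k)$, so $M_k=\sum_{i=0}^{k-1}2^{k-1}q^{i+1}=2^{k-1}q\,\frac{1-q^k}{1-q}$, from which a one-line computation gives the relation $2M_{k-1}-M_k=-2^{k-1}q^{k}$ that is the only fact about the $M$'s the argument needs.

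Next I would prove the closed form by induction on $k$, comparing $D_k$ with $H_k(j):=-2^{k-1}q^k\mathcal{T}_a(j/2^k)$ through the two de Rham branches \eqref{eq:deRhamForTa}. On the first half $0\le j\le 2^{k-1}$, rewriting the definition algebraically gives $D_k(j)-D_{k-1}(j)=\frac{j}{2^k}(2M_{k-1}-M_k)=-\frac{jq^k}{2}$; on the other side, the branch $\mathcal{T}_a(x/2)=a\mathcal{T}_a(x)+x/2$ together with $a=1/(2q)$ produces exactly $H_k(j)=H_{k-1}(j)-\frac{jq^k}{2}$. On the second half I would write $j=2^{k-1}+m$ and use the digit-shift identity $\textbf{s}_q(2^{k-1}+m')=\textbf{s}_q(m')+q^k$ (valid for $m'<2^{k-1}$); summing over $m'$ gives $S_q(2^{k-1}+m)=M_{k-1}+S_q(m)+mq^k$, and inserting $M_k=2M_{k-1}+2^{k-1}q^k$ collapses this to $D_k(2^{k-1}+m)=D_{k-1}(m)-\frac{q^k(2^{k-1}-m)}{2}$; the branch $\mathcal{T}_a(\frac{x+1}{2})=a\mathcal{T}_a(x)+\frac{1-x}{2}$ yields the identical recursion for $H_k$. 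Since the two recursions agree on both halves and match at the trivial base $k=0$ (where $D_0(0)=0=H_0(0)$), the induction closes and $D_k=H_k$.

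Finally I would obtain Part~1 by specializing the identity to $j=n$ for $n$ in the range $2^{k-1}<n\le 2^k$ and passing to $u=\log_2 n$. Solving for the mean gives $\frac1n S_q(n)=\frac{M_k}{2^k}-\frac{2^{k-1}q^k}{n}\,\mathcal{T}_a(n/2^k)$, and inserting the explicit $M_k$ turns the first term into $\frac{q}{2}\frac{1-q^k}{1-q}$. Writing $u=k-1+w$ with $w\in(0,1]$, so that $n/2^k=2^{-(1-w)}$ and $\hat{F}_q(u)=\hat{F}_q(w)$ by $1$-periodicity, I would substitute the stated formula for $\hat{F}_q$ and check by direct algebra that the two fluctuation terms coincide and that the surviving geometric-series pieces reassemble to $\frac{q}{2}\frac{1-q^k}{1-q}$, which recovers \eqref{eq:generalizedTrollopeDelange}.

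I expect the main obstacle to be the induction for the closed form of $D_k$: one must align each half with the correct de Rham branch while tracking the constant $2^{k-1}q^k$ and the factor $a=1/(2q)$ so that the geometric-series contributions cancel exactly. The second-half bookkeeping, where the digit-shift decomposition of $S_q$ and the relation $M_k=2M_{k-1}+2^{k-1}q^k$ must combine into a clean recursion, is the most delicate computation; once this identity is in hand, the reduction to \eqref{eq:generalizedTrollopeDelange} is routine.
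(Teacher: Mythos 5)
Your proof is correct, and it takes a genuinely different route from the paper's. The paper follows Girgensohn's functional-equation method: it forms $G_q(n)=\frac{1}{p_n r_n}\bigl(S_q(n)-\frac{n}{p_n}S_q(p_n)\bigr)$, derives three recursions for $G_q$ from the doubling identities of $S_q$, passes to a function $F_q$ on the dyadic rationals via $x_n=\frac{n-p_n}{p_n}$ (Girgensohn's well-definedness lemma), identifies $F_q(x)=qx-\tfrac12\mathcal{T}_a(x)$ by invoking the uniqueness theorem for de Rham-type systems (Banach fixed point, after Barnsley/Girgensohn), and then proves Part 2 by a separate and rather heavy computation that splits $[1,l)$ into the dyadic intervals $I_m$. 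You instead prove a single closed-form identity, $S_q(j)-\frac{j}{2^k}S_q(2^k)=-2^{k-1}q^k\,\mathcal{T}_a(j/2^k)$ for $0\le j\le 2^k$, by induction on $k$, showing that both sides obey the same two half-interval recursions coming from the two de Rham branches of $\mathcal{T}_a$; Part 2 is then immediate and Part 1 is routine algebra. I checked the details: your $M_k=2^{k-1}q\frac{1-q^k}{1-q}$ is the correct value (the paper's displayed formula for $S_q(p)$, with exponent $k-1$, is an off-by-one typo, as the paper's own recursion $S_q(2n)=2qS_q(n)+nq$ and its equation \eqref{eq:generalizedTrollopeDelange2} confirm), and the relation $2M_{k-1}-M_k=-2^{k-1}q^k$, both half-interval recursions, and the reassembly of \eqref{eq:generalizedTrollopeDelange} via $u=k-1+w$ all hold. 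The trade-off: the paper's method discovers $\mathcal{T}_a$ from the functional equations without advance knowledge of the answer, whereas your induction must be handed the closed form; in exchange, your argument is more elementary and self-contained (no fixed-point uniqueness theorem, no well-definedness lemma), derives both parts from one identity rather than two independent arguments, and replaces the paper's laborious interval-by-interval verification of Part 2 by a transparent induction. Two caveats, shared equally with the paper and hence not gaps in your proposal: $R_l$ is taken as a chosen constant proportional to $(2|q|)^{\log_2 l}$ rather than the canonical supremum of the Introduction, and the expression $q^{\log_2 n}$ in Part 1 needs $q>0$ (or a suitable reinterpretation) to be meaningful.
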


\emph{Remark} $1.$ Conditions $\frac12<|q|<1$ and $|q|>1/2$ in Theorem \ref{Th:mainTheorem} and Proposition \ref{Prop:MainProp} respectively are essential. For $|q|\leq1/2$  no continuous limiting curve exists.

\emph{Remark}  $2.$ Another  generalization of the Trollope-Delange formula for weighted-sum-of-binary-digits functions  was obtained in \cite{Larcher-et-al-2005} and \cite{Hofer-et-al-2008}. Their approach is different and result is asymptotic for our choice of weights $(q^i)_{i=1}^\infty$, in formula \eqref{eq:generalizedTrollopeDelange} we give a precise expression.

\subsection{Proof of Theorem \ref{Th:mainTheorem}}
The result of Theorem \ref{Th:mainTheorem} follows from the second assertion of the Proposition~ \ref{Prop:MainProp} and Lemma~\ref{lemma:bottomRung}.

\begin{proof}
Clearly we have the identities $S_q(n)=\mathcal{S}_{\mathbf{0}}^q(n)$ and $\varphi_{\mathbf{0},l}^{s_q}=\varphi_{l}^q$.
 Lemma~\ref{lemma:bottomRung} implies that  for $r=r_j$ arbitrary large (we skip index $j\in\mathbb{N}$ below)   we can assume that $x = (\underbrace{x_1,x_2, \dots ,x_m,}_m \underbrace{0, 0\dots, 0}_r, *, *, \dots,*, \dots),$ where  $m=m(r)$ and symbol $*$  means  either $0$ or $1$ entry.
Let $y=(\underbrace{x_1,x_2, \dots ,x_m,}_m \underbrace{0, 0\dots, 0}_r, 0\dots)$. We have  \[\frac{|\mathcal{S}_{y,q}(i)-\mathcal{S}_{\mathbf{0},q}(i)|}{R_n}\leq \frac{2^{m+1}\sum\limits_{i=1}^{2^m}q^i}{c2^{m+r}} \xrightarrow[r\rightarrow\infty]{} 0,\ i=0,\dots, 2^{m+r}-2^m-1.\]
and, therefore, $\parallel\varphi_{y,l_j}^{s_q} -\varphi_{\mathbf{0},l_j}^{s_q}\parallel_{\infty}\xrightarrow[j\rightarrow\infty ]{} 0,\ l_j=2^{m(r_j)+r_j}.$
Next note, that tail coordinates of $x$ denoted by $(*, *, \dots,*, \dots)$ above do not change under $T^i, i=0,\dots, 2^{m+r}-2^m-1.$ Namely, we have $s_q(T^i x) = s_q(T^iy)+b,$ where $b=b(r)$ is constant in $i$, $i=0,\dots, 2^{m+r}-2^m-1$. Due to the renormalization\footnote{Specifically,
 subtraction of the linear part $ t \cdot \mathcal{S}^g_x(l_n)$.}, we have $\varphi_{x,n}^{g+C}=\varphi_{x,n}^{g}$ for any constant $C$, so the value of $b$ does not affect\footnote{Equivalently, we may assume that $b=b_j=0$ for the given $l_j$.} $\varphi_{x,l_j}^{s_q}$.
Finally, we conclude that our choice of the stabilizing sequence $l_{j}(x)$ implies $||\varphi_{x,l_j}^{s_q} -\varphi_{\mathbf{0},l_j}^{s_q}||_{\infty}\xrightarrow[j\rightarrow \infty]{} 0$.
\end{proof}

\subsection{Proof of Proposition \ref{Prop:MainProp}}

Several approaches can be used to prove \eqref{eq:generalizedTrollopeDelange}.
We adopt approach suggested by Girgensohn in \cite{Girgensohn2012}. The idea of the approach is to start with the sequence $S(n)$ itself, discover the functional
equations within this sequence and then identify the limiting functions from the obtained functional
equations. Advantage of this approach is that it  does not require any advance knowledge of the functions appearing in the answer.

\begin{proof}
First we note\footnote{Identities \eqref{eq:s(2j)} and \eqref{eq:s(2j+1)} can be generalized for $\omega\in \mathbf{Z}_2$ and $s_q$ function using the shift operator but we do not use that. } that for any $p=2^{k-1}$
\begin{align}
\label{eq:s(2j)}
&\mathbf{s}_q(2j) = q\, \mathbf{s}_q(j),\\
\label{eq:s(2j+1)}
&\mathbf{s}_q(2j+1) =q\, \mathbf{s}_q(j)+q,\\
\label{eq:s(j+p)}
&\mathbf{s}_q(j+p) = \mathbf{s}_q(j)+q^k, \quad j=0,1,\dots, p-1,\\
\label{eq:s(j+p)}
&\mathbf{s}_q(j+p) = \mathbf{s}_q(j)-q^k(1-q), \quad j=p,p+1,\dots, 2p-1.\end{align}

Let $k_n = [\log_2(n)]$ and $u_n = \{\log_2(n)\}$, where $[\cdot]$ and $\{\cdot\}$ stand for the integer and fractional parts respectively. Following \cite{Girgensohn2012}  we denote by $p_n=p(n)=2^{k_n}$ the largest power of $2$ less than or equal to $n$ and by $r_n$ we denote $q^{\log_2(p_n)}=q^{k_n}$.

For any $n\in\mathbb{N}$ we have
\begin{align}
&S_q(n+2p_n) = S_q(n)+S_q(2p_n)+nq^{k_n+1},\label{eq:Sn+2p}\\
&S_q(n+p_n) = S_q(n)+(2q-1)S_q(p_n) - (n-p_n)q^{k_n}(1-q)+q p_n,\label{eq:Sn2+p}\\
&S_q(2n) = 2q S_q(n)+nq. \label{eq:S2n}\end{align}

 It is straightforward  to obtain from \eqref{eq:S2n} that for any $p=2^k$
\begin{equation}S_q(p) = q\frac{1-q^{k-1}}{1-q}2^{k-1} = q\frac{1-q^{k-1}}{1-q}\frac{p}{2}.\end{equation}

We define function $G_q(n)$ by the identity \begin{equation}G_q(n) = \frac{1}{p(n)r_n}\Big(S(n)-\frac{n}{p(n)}S(p_n)\Big).\end{equation}

\begin{figure}[t]
                \centering
                \includegraphics[scale=0.45]{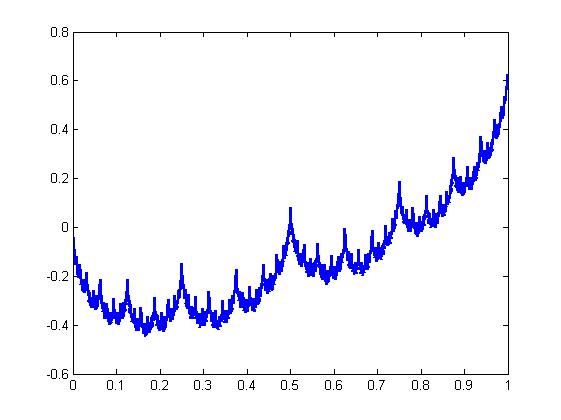}
                \caption{Graph of the function $F_{2/3}$.}
                \label{fig:Odometer}
\end{figure}

Function $G_q(n)$ satisfies the following three identities:
\begin{align}
&G_q(2n) = G_q(n),\label{eq:G(2n)}\\
&G_q(n+p_n) = \frac{1}{2q}G_q(n)+\frac{p_n-n}{4p_n}\big(3-2q\big),\label{eq:G(n+pn)}\\
&G_q(n+2p_n) =\frac{1}{2q}G_q(n)+\frac{n}{4p_n}\big(2q-1\big).\label{eq:G(n+2pn)}
\end{align}

Identities \eqref{eq:G(2n)}--\eqref{eq:G(n+2pn)} follow from \eqref{eq:Sn+2p}--\eqref{eq:S2n}; here we deduce the first one, other two are obtained in the same way.
\[\begin{split} &G_q(n+p_n) = \frac{1}{2qp_nr_n}\Big(S(n+p_n)-\frac{n+p_n}{p(n+p_n)}S(p(n+p_n))\Big) =\\
&=\frac{1}{2q}G_q(n) +\frac{1}{2qp_nr_n}\Bigg(\frac{p_n-n}{2}q+(1-q)\bigg(\Big(\frac{n}{p_n}-1\Big)S(p_n)+(n-p_n)q^{k_n}\bigg)\Bigg) = \\
&=\frac{1}{2q}G_q(n)+\frac{p_n-n}{4p_n}\big(3-2q\big).\end{split}\]

Following \cite{Girgensohn2012}, we set $x_n= x(n)= 2^{u_n}-1= \frac{n-p_n}{p_n}\in[0,1]$,
then the following simple  lemma proved in \cite{Girgensohn2012} holds:

\begin{lemma}
\label{lemma:Girgensohn}
Let $G:\mathbb{N}\rightarrow \mathbb{R}$ be a function on the integers. For $n\in \mathbb{N}, $ set \[x:=\frac{n-p_n}{p_n}\in[0,1) \quad \text{and} \quad F(x) =F\bigg(\frac{n-p_n}{p_n}\bigg):=G(n). \]
Then $F$ is a well-defined function on the dyadic rationals in $[0,1)$ iff $G(2n)=G(n)$ for all $n\in \mathbb{N}.$
\end{lemma}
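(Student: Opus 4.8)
The plan is to prove the two implications of the stated equivalence by analyzing how the assignment $n \mapsto x(n) = (n-p_n)/p_n$ behaves under the doubling map $n \mapsto 2n$, and then by describing explicitly the fibers of $n \mapsto x(n)$. Note first that surjectivity onto the dyadic rationals of $[0,1)$ is automatic: given such a rational, take its shortest binary expansion and prepend a leading $1$ to recover an integer $n$ with $x(n)$ equal to it; thus the only point at issue is well-definedness.

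The key elementary fact I would establish first is the identity $x(2n)=x(n)$ for every $n\in\mathbb{N}$. If $n=\sum_{i=0}^{k}\omega_i 2^i$ with leading bit $\omega_k=1$, then $p_n=2^k$, while $2n$ has its leading bit at position $k+1$, so $p_{2n}=2^{k+1}$ and
\[
x(2n)=\frac{2n-2^{k+1}}{2^{k+1}}=\frac{2(n-p_n)}{2p_n}=x(n).
\]
Equivalently, appending a trailing zero to the binary expansion of $n$ leaves the stripped fractional part $0.\omega_{k-1}\omega_{k-2}\dots\omega_0$ unchanged.

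The reverse implication is then immediate: if $F$ is well defined, then since $x(2n)=x(n)$ we get $G(2n)=F(x(2n))=F(x(n))=G(n)$ for every $n$. For the forward implication I would assume $G(2n)=G(n)$ for all $n$ and suppose $x(n_1)=x(n_2)$. Writing $n_1=2^{a}m_1$ and $n_2=2^{b}m_2$ with $m_1,m_2$ odd, repeated use of $x(2n)=x(n)$ gives $x(m_1)=x(n_1)=x(n_2)=x(m_2)$. I would then check that $n\mapsto x(n)$ is injective on the odd integers: for odd $m>1$ the expansion $m=1\,\omega_{k-1}\dots\omega_1 1$ terminates in a $1$, so $x(m)=0.\omega_{k-1}\dots\omega_1 1$ is the \emph{shortest} dyadic representation of $x(m)$ and $m$ is uniquely recovered from it, while $m=1$ is the only odd integer with $x(m)=0$. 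Hence $m_1=m_2$, and repeated application of $G(2n)=G(n)$ yields $G(n_1)=G(m_1)=G(m_2)=G(n_2)$, so $F$ is well defined.

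The only genuinely delicate point is the non-uniqueness of dyadic representations caused by trailing zeros; everything rests on recognizing that the fiber of $n\mapsto x(n)$ over a dyadic rational $x_0$ is exactly the doubling orbit $\{m_0 2^{j}:j\ge 0\}$ of the unique odd integer $m_0$ whose stripped tail spells $x_0$. Once this fiber description is in place — which is precisely the combination of $x(2n)=x(n)$ with the injectivity-on-odds observation — both directions follow with no further computation.
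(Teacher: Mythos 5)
Your proof is correct and complete. One point of comparison worth knowing: the paper itself contains no proof of this statement at all --- it is imported as ``the simple lemma proved in \cite{Girgensohn2012}'', so your argument supplies in full what the paper delegates to a citation. Your mechanism is the natural (and essentially the standard) one: the invariance $x(2n)=x(n)$, injectivity of $n\mapsto x(n)$ on the odd integers (via uniqueness of the terminating binary expansion ending in the digit $1$, with $m=1$ handled separately as the only odd $m$ with $x(m)=0$), and the resulting description of each fiber of $n\mapsto x(n)$ as the doubling orbit $\{m2^{j}:j\geq 0\}$ of a unique odd integer $m$. The ``only if'' direction then needs only the invariance, while the ``if'' direction needs the fiber description together with the observation --- which you make via ``repeated application'' of $G(2n)=G(n)$ --- that the hypothesis propagates along each fiber in finitely many steps. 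Your opening remark correctly separates surjectivity (automatic, by prepending a leading $1$ to the binary string of the target rational) from well-definedness, which is the only issue the lemma is about. No gaps.
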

Also we have
\begin{align}
&\frac{x(n)}{2}=\frac{n-p_n}{2p_n}=x(n+p_n),\\
&\frac{x(n)+1}{2}=\frac{n}{2p_n}=x(n+2p_n).
\end{align}
By  Lemma \ref{lemma:Girgensohn} the  function $F_q$ given by $F_q(x_n) =G_q(n)$ is well-defined on the dyadic rationales in $[0,1)$.
Identities \eqref{eq:G(n+pn)}--\eqref{eq:G(n+2pn)}  for $x=x_n$ rewrites as follows
\begin{equation}\begin{cases}F_q(x/2) =  a F_q(x)+(2q-3)\frac{x}{4},\\
F_q(\frac{x+1}{2}) =  a F_q(x)+(2q-1)\frac{x+1}{4}.\end{cases}\label{eq:FbydeRham}\end{equation}

From the system \eqref{eq:FbydeRham} we obtain
$F_q(x) =qx-\frac12\mathcal{T}_{a}(x) .$
Using $S_q(n) = r_np_n G_q(n)+\frac{n}{p_n}S_q(p_n)$ we see that
\begin{equation}
\label{eq:generalizedTrollopeDelange2}
\frac{1}{n}S_q(n) =q^{k_n} \frac{F_q(x_n)}{x_n+1}+\frac{q}{2}\frac{1-q^{k_n}}{1-q}.\end{equation}

Using $x_n=2^{u_n}-1$ and rewriting $F_q$ as $F_q(x)=  q\frac{x+1}{2}-q\mathcal{T}_a(\frac{x+1}{2})$
we obtain~\eqref{eq:TrollopeDelange}. This finishes part $1$ of the proof.

Now we prove part $2$. We fix some $l=2^j, j\in\mathbb{N}.$  We set $j_m= j-m, m=1,2,\dots,j $ and $l_m = l/2^m$.  We split the whole discrete interval $I = [1, l)$ into $j$ disjoint intervals $(I_m)_{m=1}^j,$ $I_m=[l_m, l_{m-1})$, thus  $I=\cup_m I_m.$

Let $r=q^{j-1}$ and $p=l/2$, importantly $r$, $p$,  and $j_m$ do not change in $n$ now, they depend only on $l$ and $m$.  We set $\tilde{G}(n) = \frac{1}{rp}\Big( S(n)-\frac{n}{2p}S(2p)\Big)\equiv\varphi_l(\frac{n}{l}).$
We prove \eqref{eq:varphi_l} at each $I_m,m=1,2,\dots,j,$ by starting with $I_1$ and then going to the general case $I_m, m \geq 1$.

For $I_1$  we have $\frac{l}{2}\leq n <l$ and at this interval $p=p_n $, $r=r_n$ and $\frac{x+1}{2} =\frac{n}{l}$, where $p_n,r_n $ and $x=x_n$ are defined in the proof of part $1$. Using \eqref{eq:Sn+2p}-\eqref{eq:Sn2+p} we rewrite $\tilde{G}(n) $ as follows:
\[\tilde{G_q}(n) = \frac{1}{pr}\Big(S_q(n)-\frac{nq}{2p}(2S_q(p)+p)\Big) =  \frac{1}{pr}\Big(S_q(n)-\frac{n}{p}S_q(p)+(1-q)\frac{n}{p}S_q(p)-q\frac{n}{2}\Big) =\]
\[=G_q(n)-\frac{nq}{2pr}+(1-q)\frac{n}{p}\frac{S_q(p)}{pr} = G_q(n)-\frac{nq}{2pr}+\frac{nq(1-q^{j-1})}{2pr} = G_q(n)-\frac{qn}{2p}=\]
\[=qx-\frac12\mathcal{T}_a(x)-q\frac{x+1}{2} = q\Big(\frac{x-1}{2}-\frac{1}{2q}\mathcal{T}_a(x)\Big) = -q\mathcal{T}_a\big(\frac{x+1}{2}\big) = -q\mathcal{T}_a\big(\frac{n}{l}\big),\]

Generally, we define $l_m\leq n<l_{m-1}$ and set $t =\frac{x_n+1}{2}$, note that $\frac{t}{2^{m-1}}=\frac{n}{l}$. From \eqref{eq:S2n} and \eqref{eq:deRhamForTa} we see that

\begin{equation}
S_q(l) = (2q)^m S_q\Big(\frac{l}{2^m}\Big) + 2^{m-1}l_mq(1+q+\dots+q^{m-1})
\label{eq:Sl}
\end{equation}
and
\begin{equation}
\mathcal{T}_a(t) = (2q)^m \mathcal{T}_a\Big(\frac{t}{2^m}\Big) -t q(1+q+\dots+q^{m-1}).
\label{eq:Tl}
\end{equation}

Using \eqref{eq:Sn+2p}-\eqref{eq:Sn2+p}  and \eqref{eq:Sl}-\eqref{eq:Tl} on  the interval $I_m$ we have $l_m\leq n <l_{m-1}$\\ and
$\tilde{G_q}(n) = \frac{2}{lr_{j_1}}\Big(S_q(n) -\frac{n}{l}S_q(l)\Big) = \frac{2}{lr_{j_1}}\Big(S_q(n) - \frac{n}{l_{m-1}}S_q(l_{m-1})+\frac{n}{l_{m-1}}S_q(l_{m-1})-\frac{n}{2^{m-1}l_{m-1}}S_q(l)\Big)$
$=\frac{2}{lr_{j_1}}\bigg(r_{j-m+1}\frac{l_m}{2}\tilde{G_q}(n)+\frac{n}{l_m}S_q(l_{m-1})-\frac{n}{2^{m-1}l_{m-1}}\Big((2q)^{m-1}S_q(\frac{l}{2^{m-1}})+2^{m-2}l_{m-1}(q+\dots+q^{m-1})\Big)\bigg)$
$=\frac{1}{(2q)^{m-1}}\tilde{G_q}(n)+\frac{n}{l_{m-1}}\frac{2}{lr_{j_1}}\Big((1-q^{m-1})S_q\big(\frac{l}{2^{m-1}}\big)+\frac{2^{m-2}l_{m-1}}{2^{m-1}}(q+\dots+q^{m-1})\Big)=$
$-\frac{q}{(2q)^{m-1}}\mathcal{T}_a(t)+\frac{n}{lr_{j_1}}\Big(q(1+\dots+q^{m-2})(1-q^{j_{m-1}-1})-(q+\dots+q^{m-1}))\Big)$
$=-\frac{q}{(2q)^{m-1}}\mathcal{T}_a(t)-\frac{n}{lr_{j_1}}q^{j_{m-1}-1}(q+\dots q^{m-1})=-q\mathcal{T}_a(t/2^{m-1})+\frac{t(q+\dots+q^{m-1})}{(2q)^{m-1}}-t\frac{1}{(2q)^{m-1}}(q+\dots+q^{m-1})=-q\mathcal{T}_a\big(\frac{n}{2^l}\big).$

\end{proof}

\emph{Question.} One can consider a more genaral function $\tilde{s}_2(\omega)$ defined by $\tilde{s}_2(\omega) = \sum\limits_{i=1}^\infty c_i \omega_i$ with $\sum\limits_{i=1}^\infty |c_i| <\infty$. What are the limiting curves in this case?

\textbf{Acknowledgements.} Author is grateful to Andrey Lodkin and Ilya Manaev for fruitful discussions of the ideas of the paper.

\end{document}